\newcommand{\triv}{\mathbf{1}}
\newcommand{\St}{\mathsf{St}}
\newcommand{\ch}{\vartheta}
\newcommand{\extres}{\epsilon}
\newcommand{\Res}{\mathrm{Res}}
\newcommand{\Ind}{\mathrm{Ind}}
\newcommand{\Sh}{\mathcal{S}}
\newcommand{\sgn}{\mathsf{sgn}}
\newcommand{\Tr}{\mathrm{Tr}}
\newcommand{\R}{\mathcal{R}}
\newcommand{\RR}{\mathbb{R}}
\newcommand{\Z}{\mathbb{Z}}
\newcommand{\SL}{\mathrm{SL}}
\newcommand{\PGL}{\mathrm{PGL}}
\newcommand{\GL}{\mathrm{GL}}
\newcommand{\ZZ}{\mathcal{Z}}
\newcommand{\tor}{\mathfrak{t}}
\newcommand{\lrc}[1]{\lceil #1 \rceil}
\newcommand{\g}{\mathfrak{g}}
\newcommand{\Stor}{S}
\newcommand{\ratk}{k}
\newcommand{\resk}{\kappa}
\newcommand{\ep}{\varepsilon}
\newcommand{\PP}{\mathcal{P}}
\newcommand{\p}{\varpi}
\newcommand{\val}{\mathop{val}}
\newcommand{\A}{\mathcal{A}}
\newcommand{\B}{\mathcal{B}}
\newcommand{\Aphi}{\Gamma}
\newcommand{\aphi}{\mathsf{a}}
\newcommand{\cind}{\textrm{c-}\mathrm{Ind}}
\newcommand{\mat}[1]{\left[ \begin{matrix} #1 \end{matrix} \right]}
\newcommand{\smat}[1]{\left[ \begin{smallmatrix} #1 \end{smallmatrix} \right]}
\theoremstyle{plain}
\newtheorem{theorem}{Theorem}[section]
\newtheorem{observation}[theorem]{Observation}
\newtheorem{scholium}[theorem]{Scholium}
\newtheorem{corollary}[theorem]{Corollary}
\theoremstyle{definition}
\theoremstyle{remark}
\numberwithin{equation}{section}
\begin{document}
\title[Patterns in Branching Rules for $\SL_2(\ratk)$]{Patterns in Branching Rules for Irreducible Representations of $\SL_2(\ratk)$, for $\ratk$ a $p$-adic field}
\author{Monica Nevins}
\address{Department of Mathematics and Statistics, University of Ottawa, Ottawa, Canada K1N 6N5}
\email{mnevins@uottawa.ca}
\thanks{This research is supported by a Discovery Grant from NSERC Canada.}
\subjclass[2010]{20G05}
\dedicatory{To Paul Sally, Jr., with thanks for much inspiration.}
\date{\today}
\begin{abstract}  Building on prior work, we analyze the decomposition of the restriction of an irreducible representation of  $\SL_2(\ratk)$, for $\ratk$ a $p$-adic field of odd residual characteristic, to a maximal compact subgroup $K$.  The pattern of the decomposition varies between principal series and different supercuspidal representations, whereas the $K$-representations which occur in the ``tail end'' of these decompositions are precisely those occuring in the decomposition of depth-zero supercuspidal representations.  Various applications are considered.
\end{abstract}

\maketitle

\section{Introduction}

The representation theory of $\SL_2(\ratk)$ is one of the most completely and classically studied areas of $p$-adic representation theory, being the natural starting point for any new line of investigation.  It is so accessible that it can be presented within 46 well-written pages of the definition of $p$-adic numbers, as in \cite{SallyPhysics}.  A marvelous attribute of this theory, and part of its attraction, is that there is still so much to be said.   

The idea of branching rules is to explore the decomposition of the restriction of an irreducible representation $\pi$ of $G$ to an interesting subgroup $K$, with the intention of discovering new perspectives on $\pi$, or even conversely, towards better understanding the representation theory of $K$.   

When $G$ is a semisimple real Lie group and $K$ its unique (up to conjugacy) maximal compact subgroup, for example, this idea led to the fundamental  discovery of the unicity of lowest $K$-types \cite{Vogan} as a classifying tool.  Furthermore, recent work with the ATLAS project has inspired questions of calculability of these types, and reflections on the representation theory of $K$ \cite{Vogan2}.

In contrast to the real case, when $G$ is a semisimple $p$-adic group there are in general several conjugacy classes of maximal compact subgroups.  If $G$ is simply connected, for example, they  correspond to conjugacy classes of stabilizers of vertices of the associated Bruhat-Tits building \cite[\S3.2]{Tits}.  Furthermore, the representation theory of these compact groups---which often reduces to that of Lie groups over finite local rings, an area of growing interest in its own right---is surprisingly incomplete in all but a few cases.

One highly successful direction of inquiry has been to replace $K$ with a variety of smaller compact open subgroups (whose representation theory is more amenable to study, as per the work of Howe \cite{Howe}, for example).  This led to the theory of types \cite{BushnellKutzko, MoyPrasadKtypes}, in which the classification of irreducible admissible representations is reduced to that of certain pairs $(\rho, J)$ where $\rho$ is an irreducible representation of the (usually non-maximal) compact open subgroup $J$. 

On the other hand, the original question of understanding the branching rules for the restriction of a representation of $G$ to a maximal compact subgroup $K$ remains, and is of interest on a number of fronts.  It was studied, under the hypothesis (here and throughout) that the residual characteristic of $\ratk$ is odd, for $\GL_2(\ratk)$ in \cite{Hansen, Casselman} and for $\PGL_2(\ratk)$ in \cite{Silberger,Silberger2}, where the goal was often to obtain a deeper understanding of the harmonic analysis of these groups.   In his doctoral thesis under Paul Sally, Jr. \cite{Tuvell}, Walter Tuvell determined the branching rules for principal series representations of $\SL_2(\ratk)$ as a step towards explicitly calculating the Eisenstein integrals, which can be used to compute Fourier transforms of non-invariant distributions.  Although successful, this proved to be an arduous task.


The branching rules for principal series of $\GL_3(\ratk)$ were partially determined in \cite{NC1,NC2}; obtaining a complete answer would go a long way towards the classification of the irreducible representations of $K=\GL_3(\R)$.

The branching rules for principal series of $\SL_2(\ratk)$, restricted to any maximal compact subgroup, were also determined in \cite{Nevins}, taking advantage of the classification of irreducible representations of $K = \SL_2(\R)$ given by Shalika in his thesis \cite{Shalika}.   Similarly, the branching rules for supercuspidal representations of $\SL_2(\ratk)$ were determined in \cite{Nevins2}.  Since all representations of $\SL_2(\ratk)$ fall into one of these categories, this allows one to complete the discussion for the group $\SL_2(\ratk)$ (when $\ratk$ has odd residual characteristic) in the present work.

Our main result uses the description of the branching rules from \cite{Nevins, Nevins2} to demonstrate the commonalities and differences of the components occuring in the branching rules for irreducible admissible representations of $\SL_2(\ratk)$, as made precise in Observations~\ref{O:depthzero} and \ref{O:posdepth}, Theorem~\ref{T:tailends} and its corollaries.  We discuss further applications in greater detail in Sections~\ref{SS:42} through \ref{SS:last}.  

The study of $\SL_2(\ratk)$ (and of principal series of $\GL_3(\ratk)$) has identified the next steps in this investigation of branching rules.  That is, although principal series have the advantage of a simple classification, the more direct realization of the $K$-irreducibles occuring in the branching rules for $\SL_2(\ratk)$ arises from the consideration of the supercuspidal representations.  Yu \cite{Yu} has given a construction of supercuspidal representations which applies to all connected reductive $p$-adic groups which split over a tamely ramified extension.   Kim \cite{Kim} has shown this construction to be exhaustive in many cases.  This construction is the one followed here and in \cite{Nevins2} to describe the supercuspidal representations of $\SL_2(\ratk)$, as it offers a potential template for extension to the general case.

In Section~\ref{S:reps} we set our notation and summarize the representation theories of $\SL_2(\ratk)$ and $\SL_2(\R)$.  In Section~\ref{S:BR}, we present the relevant results from \cite{Nevins, Nevins2}, using a unified notation to allow for vivid comparison, while avoiding the technical details that were needed to derive these branching rules.     We conclude in Section~\ref{S:conclusion} with our main theorems and observations.

\subsection*{Acknowledgments}  My sincere thanks to Fiona Murnaghan, Loren Spice and Jeff Adler for sharing their expertise on supercuspidal representations and Bruhat-Tits theory with me.   Their encouragement and advice throughout the course of this project have been invaluable.

\section{Representations of $\SL_2(\ratk)$ and $\SL_2(\R)$} \label{S:reps}

Let $\ratk$ be a $p$-adic field  with residue field $\resk$ of characteristic $p$.  Denote its integer ring by $\R$ and its maximal ideal by $\PP$.  Denote the congruence subgroups of $\R^\times$ by $1 + \PP^n$ for $n > 0$.  Choose a nonsquare $\ep \in \R^\times$; if $-1$ is not a square then we choose $\ep = -1$.  Let $\p$ be a uniformizer of $\ratk$ and normalize the valuation $\val$ on $\ratk$ so that $\val(\p)=1$.    Let $\Psi$ denote an additive character of $\ratk$ such that $\Psi$ is nontrivial on $\R$ and trivial on $\PP$.  
For $\tau \in \{\ep,\p, \ep\p\}$, the character $\sgn_\tau$ of $\ratk^\times$ is defined by $\sgn_\tau(x)=(x,\tau)$ for all $x\in \ratk^\times$, where $(\cdot,\cdot)$ denotes the 2-Hilbert symbol. 
For $r \in\RR$, we denote by $\lrc{r}$ the least integer $n$ such that $n \geq r$.   The extended real numbers, used as indices in Moy-Prasad filtrations, are the set $\tilde{\RR} = \RR \cup (\RR+) \cup \infty$; we define $\lrc{r+}$ to be the least integer $n$ such that $n>r$.
We abuse notation by using the same letter and font to denote an algebraic group as its group of $\ratk$-points; in context it should cause no confusion.

\subsection{Irreducible representations of $\SL_2(\ratk)$}
Let $G = \SL_2(\ratk)$ and $K = \SL_2(\R)$.  Let $Z = \ZZ(G) = \{\pm I\}$.  Set $\eta = \smat{1 & 0 \\ 0 & \p}$; then representatives of the two distinct classes of maximal compact subgroups of $G$ are $K$ and $K^\eta = \eta K \eta^{-1}$.  One sees this from the building $\B = \B(G,\ratk)$ of $G$ over $\ratk$. 
Let $\Stor$ denote the diagonal split torus of $G$ and $\A = \A(G,\Stor,\ratk)$ the corresponding apartment in $\B$.  The apartment is one-dimensional and we make the usual choice of coordinates on $\A$ so that the stabilizer of $y=0$ is $K=G_0$ and that of $y=1$ is $K^\eta = G_{1}$.  These points are representatives of the two classes of vertices in $\B$.

For any $x\in \B(G,\ratk)$, Moy and Prasad \cite{MoyPrasadKtypes} defined filtration subgroups of the stabilizer subgroup $G_x$, indexed by the extended number system $\tilde{\RR}$.  For $\SL_2(\ratk)$ and $x$ in the standard apartment $\A$ one can describe these filtration subgroups quite simply: for $r>0$, $G_{x,r}$ consists of those matrices in $G$ of the form $\smat{1+\PP^{\lrc{r}} & \PP^{\lrc{r-x}}\\ \PP^{\lrc{r+x}} & 1+\PP^{\lrc{r}}}$. 

The \emph{depth} of an irreducible smooth representation $(\pi,V)$ of $G$ is defined as the least $r \geq 0$ such that there exists $x\in \B$ for which $V$ contains vectors invariant under $G_{x,r+}$.
Similarly, if $x\in \B$ is fixed and $(\pi,V)$ is a representation of $G_x$ then we may define its depth relative to $x$.  

By a classical theorem of Jacquet \cite{Cartier}, all representations of $\SL_2(\ratk)$ are either supercuspidal representations or else occur as subrepresentations of representations parabolically induced from a supercuspidal representation of a proper Levi subgroup; this latter is precisely the principal series representations.  A principal series representation of $\SL_2(\ratk)$ is given by the choice of a character $\chi$ of $\Stor \cong \ratk^\times$, which is extended trivially over the upper triangular Borel subgroup $P$, and then induced, via normalized parabolic induction, to $G$.  
If $\chi$ is of depth $r$, then so is $\Ind_P^G \chi$.

The supercuspidal representations of $\SL_\ell(\ratk)$ in the tame case were established by Kutzko and Sally in \cite{KutzkoSally} and by Moy and  Sally in \cite{MoySally}.   There exist many subtly different descriptions of the construction of supercuspidal representations, including a general one by Stevens \cite{Stevens} extending \cite{BushnellKutzko} to all classical groups, without restriction on residual characteristic.  The one we choose to follow here is due to J.K.~Yu \cite{Yu}.  This work is given additional expository treatment in \cite{KimMonograph} and \cite{HakimMurnaghan}.

One begins by describing the cuspidal representations of $\SL_2(\resk)$.  This is well-known; see for example, the book by Digne and Michel \cite{DigneMichel}.  Let $\extres$ denote the unique quadratic extension field of $\resk$, and $N \colon \extres \to \resk$ the norm map.  Then $\ker(N)$ has order $q+1$.  Each character $\omega$ of this group gives rise to a representation $\sigma = \sigma(\omega)$ of $\SL_2(\resk)$ via Deligne-Lusztig induction.  If $\omega^2 \neq 1$, then this representation is irreducible, cuspidal and of degree $q-1$.  If $\omega = \omega_0$, the unique nontrivial character of order two, then $\sigma(\omega_0)$ decomposes into the two remaining, inequivalent irreducible cuspidal representations as $\sigma_0^+ \oplus \sigma_0^-$.  

The depth-zero supercuspidal representations are precisely the representations of the form 
$$
\cind_K^G \sigma \qquad \textrm{or} \qquad \cind_{K^\eta}^G \sigma^\eta 
$$
where $\cind$ denotes compact induction, $\sigma$ denotes the inflation of an irreducible cuspidal representation of $\SL_2(\resk)$ to $K$, and $\sigma^\eta$ is the representation of $K^\eta$ given by $\sigma^\eta(x)=\sigma(\eta^{-1}x\eta)$ for all $x\in K^\eta$.  That these exhaust the set of depth-zero supercuspidal representations is a special case of a general fact established in \cite{MoyPrasadJacquet, Morrislevel0}.

For the positive depth case, we begin by choosing an anisotropic torus $T$ of $G$.  Then the apartment of $T$ (over a splitting field) intersects $\B$ in a point $y = y_T$.  One can choose representatives for the distinct equivalence classes of anisotropic tori from among tori of the form
$$
T = T_{\gamma_1,\gamma_2} = \left\{ \left. t(a,b):=\mat{a & b\gamma_1 \\ b\gamma_2 & a} \right| a,b\in \R, \det(t(a,b))=1\right\}
$$
for some pair $(\gamma_1,\gamma_2)$ such that $\sqrt{\gamma_1\gamma_2} \notin \ratk$.  One distinguishes a torus as ramified or unramified, depending on its splitting field $\ratk(\sqrt{\gamma_1\gamma_2})$.  We give a list of representatives, chosen so that the associated points $y$ lie in the fixed apartment $\A$, in Table~\ref{Table:tori}.  

\begin{table}
\begin{tabular}{|l||l|c|}
\hline && \\ [-1.5ex] 
Unramified tori & $\displaystyle T_{1,\ep}$ & $y=0$\\[1ex]
& $\displaystyle T_{\p^{-1},\ep \p} =  T_{1,\ep}^\eta$ & $y=1$\\[1ex]
\hline && \\ [-1.5ex]
Ramified tori & $\displaystyle T_{(1,\p)}$ & $y=\frac12$\\[1ex]
with splitting field $\ratk(\sqrt{\p})$ &  $\displaystyle T_{(\ep,\ep^{-1}\p)}$ (if $-1 \in (\ratk^\times)^2$) & $y = \frac12$\\[1ex]
\hline && \\ [-1.5ex]
Ramified tori & $\displaystyle T_{(1,\ep\p)}$ & $y=\frac12$\\[1ex]
with splitting field $\ratk(\sqrt{\ep\p})$ &  $\displaystyle T_{(\ep,\p)}$ (if $-1 \in (\ratk^\times)^2$) & $y = \frac12$\\[1ex] \hline 
\end{tabular}
\caption{Representatives of the equivalence classes of anisotropic tori in $\SL_2(\ratk)$.  There are four if $-1 \notin (\ratk^\times)^2$ (when $\ep=-1$), and six otherwise.} \label{Table:tori}
\end{table}

The Lie algebra of $T_{\gamma_1,\gamma_2}$, denoted $\tor_{\gamma_1,\gamma_2}$, is the one-dimensional subalgebra of $\g$ spanned by 
\begin{equation} \label{E:X}
X_T = X_{\gamma_1,\gamma_2} = \mat{0&\gamma_1\\\gamma_2 & 0}.
\end{equation}
For any $u\in \tilde{\RR}$, $u > 0$, and given $y$ corresponding to $T$ as in the table, the Moy-Prasad filtration subgroups of $T$ can be described by $T_u = \{ t(a,b) \mid a\in 1+\PP^{\lrc{u}}, b\gamma_1\in \PP^{\lrc{u - y}}\}$.  For any $u \in \tilde{\RR}$, the filtration on $\tor$ is given by $\tor_u = \{ cX_{\gamma_1,\gamma_2} \mid c\gamma_1 \in \PP^{\lrc{u - y}}\}$.


A positive depth supercuspidal representation is parametrized by a generic tamely ramified cuspidal $G$-datum, which in the $\SL_2$ case consists of an anisotropic torus $T$, together with the corresponding point $y \in \B$, and a \emph{generic} quasi-character $\phi$ of $T$ of some depth $r>0$.  If $T$ is unramified then $r\in \Z$ and otherwise $r\in \frac12 + \Z$.  One extends $\phi$ to a representation $\rho$ of $TG_{y,r/2}$ as described below.  The corresponding induced representation $\cind_{TG_{y,r/2}}^G \rho$ is of depth $r$, and the collection of all these exhaust the positive depth supercuspidal representations of $G$.  In \cite{HakimMurnaghan}, it is shown that two such supercuspidal representations are equivalent if and only if their data are $G$-conjugate.

We now briefly describe the extension.  Set $s = r/2$.  Note that $T_{s+}/T_{r+} \cong \tor_{s+}/\tor_{r+}$ via the map $t \mapsto t-I$, and that all characters of the abelian group $\tor_{s+}/\tor_{r+}$ are of the form $X \mapsto \Psi(\Tr(YX))$ for some $Y\in \tor_{-r}$.  Now the restriction of $\phi$ to $T_{s+}$ defines a character of $T_{s+}/T_{r+}$, and so there exists some $\Aphi \in \tor_{-r}$, unique modulo $\tor_{-s}$, such that
\begin{equation}  \label{E:aphi}
\phi(t) = \Psi(\Tr(\Aphi (t-I))) \quad \textrm{for all $t\in T_{s+}$.}
\end{equation}
The genericity of $\phi$ is equivalent in this case to $\Aphi = \aphi X_{T}$ with $\val(\aphi \gamma_1) = -(r+y)$.
The formula \eqref{E:aphi} can also be used to define a character of $G_{y,s+}$, trivial on $G_{y,r+}$, which we'll denote $\Psi_\Aphi$.  The result is an extension of $\phi$ to a character $\hat{\phi} = \phi\Psi_\Aphi$ of $TG_{y,s+}$.  When $T$ is ramified, or when $T$ is unramified and $r$ is odd, we have $G_{y,s+} = G_{y,s}$, and so we may take $\rho = \hat{\phi}$.  Otherwise, that is, when $T$ is unramified and $r$ is even, then J.K.~Yu specifies a canonical construction, using the Weil representation, of an extension of $\hat{\phi}$ to a representation $\rho$ of $TG_{y,s}$ of degree $q$.

\subsection{$L$-packets of irreducible representations of $\SL_2(\ratk)$}
The $L$-packets of irreducible admissible representations of $G$ are given simply by $\GL_2(\ratk)$-conjugacy.  That is, for each (tame) irreducible admissible representation of $\GL_2(\ratk)$, its restriction to $\SL_2(\ratk)$ is the direct sum of one, two or four irreducible representations (see, for example, \cite{MoySally}), and these representations constitute an $L$-packet of $\SL_2(\ratk)$.  The $L$-packets are thus grouped into the following six classes:
\begin{itemize}
\item irreducible principal series representations: $\{\Ind_P^G \chi \}$ where $\chi$ is not a sign character;
\item reducible principal series representations: $\{H_\tau^+, H_\tau^-\}$, where $\Ind_{P}^G \sgn_\tau = H_\tau^+ \oplus  H_\tau^-$, for $\tau \in \{ \ep, \p, \ep \p\}$;
\item depth-zero special supercuspidal representations:  $\{ \cind_K^G \sigma_0^+$, $\cind_K^G\sigma_0^-$, $\cind_{K^\eta}^G (\sigma_0^+)^\eta$, $\cind_{K^\eta}^G (\sigma_0^-)^\eta\}$
\item other depth-zero supercuspidal representations: $\{ \cind_K^G \sigma, \cind_{K^\eta}^G \sigma^\eta\}$
\item positive depth unramified supercuspidal representations: $\{ \cind_{T_{1,\ep}G_{0,s}}^G \rho,$ $\cind_{T_{1,\ep}^\eta G_{1,s}}^G \rho^\eta\}$
\item ramified supercuspidal representations: $\{ \cind_{TG_{\frac12,s}}\rho,$ $\cind_{T^\beta G_{\frac12,s}}\rho^\beta\}$, where  $\beta = \smat{1 & 0\\0 & \ep} \in \GL_2(\ratk)$ and $T, T^\beta$ are ramified tori.
\end{itemize}
We return to these sets in Section~\ref{SS:46}.

\subsection{Some representations of $\SL_2(\R)$}
The irreducible representations of $K = \SL_2(\R)$ were determined by Shalika in his thesis \cite{Shalika}.  One can easily derive the representation theory of $K^\eta$ from this, as done in \cite{Nevins}.  In the present work, we have need only of the so-called \emph{ramified representations}, which we may describe as follows.  Our exposition differs from the original in that we parametrize the representation by depth rather than conductor (and so our indices are off by one), and that we replace with $\Psi$ the collection of choices of additive characters $\eta_k$.  

Let $\ell > 0$, $\ell \in \frac12\Z$.  For any choice of $(u,v)$ such that $\val(v)>\val(u)=0$, define $X = X_{u,v}$ to be the corresponding antidiagonal matrix \eqref{E:X}.  Then it is straightforward to see that the formula
$$
\Psi_X(z) = \Psi(\p^{-2\ell}\Tr(X(z-I))), \quad z\in G_{[0,\frac12],\ell}
$$
defines a character $\Psi_X$ of $G_{[0,\frac12],\ell}$ which is  trivial on $G_{0,2\ell+}$.  Note that such an $X_{u,v}$ is uniquely defined by $\Psi_X$ only modulo $\g_{[0,\frac12],\ell}$.  (It would be more natural to replace $X$ with $\p^{-2\ell}X$ as in Yu's construction, but this leads to more awkward notation.)  Let $C(X)$ denote the centralizer of $X$ in $K$; note that $C(X) = C(aX)$ for any $a\in \ratk^\times$, and that $C(X_{u,v}) = T_{u,v}$.   Given any character $\varphi$ of $C(X)$ agreeing with $\Psi_X$ on the intersection $C(X) \cap G_{[0,\frac12],\ell}$, $\varphi\Psi_X$ defines a character of $C(X)G_{[0,\frac12],\ell}$.  Shalika proved that the resulting induced representation
$$
\Sh_{2\ell}(\varphi,X) := \Ind_{C(X)G_{[0,\frac12],\ell}}^K \varphi \Psi_X
$$
is irreducible and has depth $2\ell$.  Each such representation has degree $\frac12 q^{2\ell-1}(q^2-1)$, and these exhaust all irreducible representations of $K$ whose degree is of this form for some $2\ell \in \mathbb{N}$.  Furthermore, if two such representations of the same depth are equivalent then their parameter pairs $(\varphi,X \mod \mathfrak{g}_{[0,\frac12],\ell})$ are conjugate under $K$.

\section{Branching Rules for Irreducible Representations of $\SL_2(\ratk)$} \label{S:BR}

The key tool in the restriction of representations of $\SL_2(\ratk)$ to $K = \SL_2(\R)$ is Mackey theory, whose analogue to the case of compactly induced representations was shown by Kutzko in \cite{KutzkoMackey}.  In the case of supercuspidal representations of $\SL_2(\ratk)$, this decomposition has the form
$$
\Res_K \cind_L^G \rho \cong \oplus_{\gamma \in K \backslash G / L} \Ind_{K \cap L^\gamma}^K \rho^\gamma
$$
and it is shown in \cite{Nevins2} that this decomposition is in many cases a decomposition into irreducibles.  

\subsection{Depth-zero representations}
Let us first consider the depth-zero supercuspidal representations $\cind_K^G\sigma$ and $\cind_{K^\eta}^G\sigma^\eta$.  Since $K \backslash G / K$ and $K \backslash G / K^\eta$ can each be represented by the set
$$
\left\{ \left. \alpha^{t} := \mat{\p^{-t} & 0 \\ 0 & \p^t} \right| t \geq 0 \right\},
$$
Mackey theory gives a decomposition indexed by $t \in \mathbb{N}$.  It is easy to see that the depth of $\Ind_{K \cap K^{\alpha^t}}^K\sigma^{\alpha^t}$ is $2t$ whereas that of $\Ind_{K\cap K^{\alpha^t\eta}}^G\sigma^{\alpha^t\eta}$ is $2t+1$.  

If we choose $\sigma \in \{\sigma_0^\pm\}$, then some character calculations \cite{Nevins2} reveal that these components of the Mackey decomposition intertwine with Shalika representations of the same depth and degree, hence are irreducible.
  More precisely, setting $\ch_0$ to be the central character of $\sigma_0^\pm$, we have
$$
\Res_K \cind_K^G \sigma_0^\pm \cong \sigma_0^\pm \oplus \left(\bigoplus_{t > 0} \Sh_{2t}(\ch_0, X_{u_{\pm}, 0})\right)
$$
where $u_\pm$ are elements of $\{1,\ep\}$ chosen so that $u_+ \equiv  -1$ and $u_- \equiv  -\ep$ modulo $(\R^\times)^2$.  Similarly, one has
\begin{equation} \label{E:eta0}
\Res_K \cind_{K^\eta}^G (\sigma_0^\pm)^\eta \cong \bigoplus_{t \geq 0} \Sh_{2t+1}(\ch_0, X_{u_{\pm}, 0}).
\end{equation}

The remaining depth-zero supercuspidal representations decompose into a great\-er number of irreducible constituents.  That is, each Mackey component is itself the direct sum of two inequivalent irreducibles of the same depth and same degree \cite{Nevins2}, and in fact these constituents are the same or similar to those appearing in the branching rules for the depth-zero special supercuspidal representations.  More precisely, denoting by $\ch$ the central character of the irreducible Deligne-Lustzig cuspidal $\sigma$, we have 
$$
\Res_K \cind_K^G \sigma \cong \sigma \oplus \bigoplus_{t > 0} \left(\Sh_{2t}(\ch, X_{1, 0}) \oplus \Sh_{2t}(\ch, X_{\ep, 0})\right)
$$
and
\begin{equation} \label{E:eta1}
\Res_K \cind_{K^\eta}^G \sigma^\eta \cong \bigoplus_{t \geq 0} \left( \Sh_{2t+1}(\ch, X_{1, 0}) \oplus  \Sh_{2t+1}(\ch, X_{\ep, 0})\right).
\end{equation}

These branching rules fit well with those of the depth-zero principal series representations from \cite{Nevins}.  In the case of principal series, the Mackey decomposition gives no information since $G=KP$.  Instead, one can filter $\Ind_P^G\chi$ by its $K_n$-invariant subspaces, where $K_n = G_{0,n}$ is the $n$th congruence subgroup.  Each of these subrepresentations has depth $n-1$, and is shown to contain precisely two irreducible subrepresentations of depth $d$ for each $1 \leq d < n$.

More precisely, for any depth-zero character $\chi$ of $\ratk^\times$, that is, a character which is trivial on $1+\PP$, one has
\begin{equation} \label{E:prinseries}
\Res_K \Ind_{P}^G \chi \cong (\Ind_{P}^G \chi)^{K_1} \oplus \bigoplus_{t\geq 1}\left( \Sh_t(\chi, X_{1,0}) \oplus  \Sh_t(\chi, X_{\ep,0}) \right).
\end{equation}
By well-known results in the representation theory of $\SL_2(\resk)$, the first component is irreducible (being the inflation of the corresponding principal series representation of $\SL_2(\resk)$) unless $\chi$ restricts to either the trivial character $\triv$ or the sign character $\sgn$ on $\R^\times$.   In the first case, $(\Ind_{P}^G \chi)^{K_1}$ decomposes as $\triv \oplus \St$, where $\St$ denotes the $q$-dimensional Steinberg representation of $\SL_2(\resk)$; in the second case, it decomposes as a direct sum of two inequivalent representations $\Xi_\sgn^\pm$ of the same degree $\frac12 (q+1)$.

With respect to our normalized induction, the three (equivalence classes of) reducible principal series are given by choosing $\chi = \sgn_\tau$, where $\tau$ represents any of the three nontrivial square classes of $\ratk^\times/(\ratk^\times)^2$.  Note that $\Res_{\R^\times}\sgn_\ep = \triv$ and $\Res_{\R^\times} \sgn_{a\p} = \sgn$ for $a\in \R^\times$.  Denote the decomposition of $\Ind_P^G \sgn_\tau$ into irreducibles by  $\pi_\tau^+ \oplus \pi_\tau^-$, where $\pi_\tau^+$ lives on a subspace $H_\tau^+$ consisting of functions supported on the set $\{u\in \ratk^\times \mid \sgn_\tau(u)=1\}$ in a natural realization \cite[Ch2\S3.5]{GGPS}.  Setting $\diamond =+$ if $-1 \notin (\ratk^\times)^2$ and $\diamond=-$ otherwise, we have \cite{Nevins}
\begin{equation} \label{E:redprinseries}
\Res_K (\pi_\tau^+) \cong 
\begin{cases}
\triv \oplus \bigoplus_{t>0} \left(\Sh_{2t}(1, X_{1,0}) \oplus \Sh_{2t}(1, X_{\ep,0})\right) &\textrm{if $\tau = \ep$}\\
\Xi_\sgn^\diamond \oplus \bigoplus_{n\geq 1} \Sh_{n}(\sgn_\tau,X_{1,0})
 & 
 \textrm{if $\tau = -\p$,}\\
\Xi_\sgn^{\diamond} \oplus \bigoplus_{t > 0} \left( \Sh_{2t-1}(\sgn_\tau,X_{\ep,0}) \oplus \Sh_{2t}(\sgn_\tau,X_{1,0})\right) & 
\textrm{if $\tau = -\ep \p$},
\end{cases}
\end{equation}
with $\Res_K(\pi_\tau^-)$ given by the complement of \eqref{E:redprinseries} in \eqref{E:prinseries}.

We make the following observation.

\begin{observation} \label{O:depthzero}
The irreducible components of positive depth arising in the branching rules of a depth-zero representation $\pi$ of $G$ are chosen from the set
$$
\{ \Sh_{t}(\ch, X_{1,0}), \Sh_t(\ch,X_{\ep,0}) \mid t\geq 1\}
$$
where $\ch$ denotes the central character of $\pi$.
\end{observation}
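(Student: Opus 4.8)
I would prove Observation~\ref{O:depthzero} directly from the branching rules already displayed; no new computation is needed, only an enumeration of cases together with a reconciliation of notation. First I would list the depth-zero irreducible representations of $G$. By Jacquet's theorem \cite{Cartier} and the classification recalled in Section~\ref{S:reps}, every depth-zero irreducible admissible representation $\pi$ of $G$ is, up to equivalence, either an irreducible constituent of $\Ind_P^G\chi$ for a depth-zero character $\chi$ of $\ratk^\times$ (that is, one trivial on $1+\PP$), or a depth-zero supercuspidal representation $\cind_K^G\sigma$ or $\cind_{K^\eta}^G\sigma^\eta$, where $\sigma$ is the inflation to $K$ of an irreducible cuspidal representation of $\SL_2(\resk)$, possibly one of the $\sigma_0^\pm$ (that these exhaust the depth-zero supercuspidals is the result of \cite{MoyPrasadJacquet,Morrislevel0} cited above). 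For each of these families the decomposition of $\Res_K\pi$ is recorded above: the two displayed formulas for $\Res_K\cind_K^G\sigma_0^\pm$ and $\Res_K\cind_K^G\sigma$, together with \eqref{E:eta0} and \eqref{E:eta1} for the classes attached to $K^\eta$; and \eqref{E:prinseries}, together with \eqref{E:redprinseries} and the complement described after it, for the principal series. Here I would note that the depth-zero constituents of $\Ind_P^G\chi$ are absorbed into the summand $(\Ind_P^G\chi)^{K_1}$, so that each irreducible constituent of a (possibly reducible) depth-zero principal series contributes only the indicated Shalika summands in positive depth.

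Next I would read off the positive-depth summands. In every one of these formulas they are Shalika representations of the form $\Sh_t(\varphi,X_{u,v})$ with $v=0$. Since $X_{u,0}$ is a unit multiple of $X_{u',0}$ for any units $u,u'$, and since $\Sh_{2\ell}(\varphi,X)$ depends on $X$ only modulo $\g_{[0,\frac12],\ell}$, the parameter may always be normalized so that $u\in\{1,\ep\}$, which is exactly how the formulas are written. It then remains to verify, in each case, that the character $\varphi$ restricts on $Z=\{\pm I\}$ to the central character $\ch$ of $\pi$. For the supercuspidal families this is immediate: $\ch$ (respectively $\ch_0$) was by definition the central character of $\sigma$ (respectively $\sigma_0^\pm$), and the central character of a representation compactly induced from $K$ or $K^\eta$ is that of the inducing representation restricted to $Z$. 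For the principal series, the central character of $\Ind_P^G\chi$ is the value of $\chi$ at $-I\in\Stor$, which is exactly what the parameter $\chi$ in the Shalika summands of \eqref{E:prinseries} restricts to on $Z$; for a reducible principal series $\Ind_P^G\sgn_\tau=\pi_\tau^+\oplus\pi_\tau^-$ both constituents have central character $\sgn_\tau(-1)=(-1,\tau)$, and the Shalika parameter in \eqref{E:redprinseries} is $\sgn_\tau$, which, when $\tau=\ep$ is a unit, is trivial on $\R^\times$ — so its value at $-1$ is $1$, matching the label in the first case of \eqref{E:redprinseries} — because the Hilbert symbol of two units is trivial in odd residual characteristic. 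Putting these together shows that every positive-depth irreducible summand of $\Res_K\pi$ lies in $\{\Sh_t(\ch,X_{1,0}),\Sh_t(\ch,X_{\ep,0})\mid t\geq 1\}$.

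I expect the only real obstacle to be the bookkeeping in the last step: the branching rules are imported from \cite{Nevins,Nevins2}, where the character attached to a Shalika representation is most naturally a character of the centralizer $C(X_{u,0})$ rather than of $\ratk^\times$ or of $Z$, so one must unwind those conventions carefully to confirm that its restriction to $Z$ is indeed the central character of $\pi$ in every case, and, for the reducible principal series, to track how the Shalika summands are distributed between $\pi_\tau^+$ and $\pi_\tau^-$. Apart from this, the argument is a routine case-by-case inspection.
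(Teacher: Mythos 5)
Your proposal is correct and takes essentially the paper's route: the observation is stated there without a separate proof precisely because it is read off the displayed depth-zero branching rules (the formulas for $\cind_K^G\sigma_0^\pm$, \eqref{E:eta0}, $\cind_K^G\sigma$, \eqref{E:eta1}, \eqref{E:prinseries} and \eqref{E:redprinseries}), which is exactly your case-by-case inspection, together with the correct bookkeeping check that each Shalika character parameter restricts on $Z$ to the central character of $\pi$. One small caution: the normalization $u\in\{1,\ep\}$ is governed by $K$-conjugacy, i.e.\ by the square class of the unit $u$ (scaling $X_{u,0}$ by a nonsquare unit does \emph{not} give an equivalent Shalika representation), not by $X_{u,0}$ being a unit multiple of $X_{u',0}$ — though this is harmless here since the quoted formulas already use the representatives $X_{1,0}$ and $X_{\ep,0}$.
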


\subsection{Positive depth representations} \label{SS:pos}
The branching rules for irreducible representations of $\SL_2(\ratk)$ of positive depth follow several different patterns.

Let us first consider the case of a supercuspidal representation $\pi$ of $G$ of positive depth $r$ associated to the datum $(T,y,\phi)$.  Set $s=r/2$.   Choose $\Aphi= \aphi \p^{-\lrc{r+y}} X_{1,\gamma_1^{-1}\gamma_2}$ representing $\phi$ on $T_{s+}$. 
To determine the Mackey decomposition of the restricted representation $\Res_K\pi = \Res_K\cind_{TG_{y,s}}^G \rho$, we first note that if $\Lambda$ is a set of coset representatives for $(K \cap K^{\alpha^{-t}})\backslash K /TG_{y,s}$, then $\{ I, \alpha^t\lambda \mid \lambda \in \Lambda, t>0\}$ is a set of representatives for the Mackey double coset space $K \backslash G / TG_{y,s}$.

If $T = T_{1,\ep}$ and $y=0$, then $(K \cap K^{\alpha^{-t}})\backslash K /TG_{y,s}$ is represented by $\Lambda = \{I, E\}$ where $E = \smat{u & v \\ v &\ep^{-1}u }$ for some $(u,v) \in \R^\times \times \R^\times$ satisfying $u^2-v^2\ep = \ep$.  For $\lambda \in \Lambda$, the representations $\rho^{\alpha^t\lambda}$ each have depth $r+2t$.  One can show that the corresponding component 
$\Ind_{K \cap (TG_{y,s})^{\alpha^t\lambda}}^K  \rho^{\alpha^t\lambda}$ can be constructed from Shalika data $(\phi^{\alpha^t\lambda},\p^{-(r+2t)}\Aphi^{\alpha^t\lambda})$; this argument requires making a choice of representative $\Aphi$ modulo $\tor_{(-s)+}$ in the case that $G_{y,s} \neq G_{y,s+}$ \cite{Nevins2}.  It follows that Mackey theory gives a decomposition of $\displaystyle \Res_K \cind_{TG_{0,s}}^G \rho$ into irreducibles as
\begin{equation} \label{E:unram}
\Ind_{TG_{0,s}}^K \rho 
\oplus \bigoplus_{t>0}
\left( \Sh_{r+2t}(\phi^{\alpha^{t}},\aphi X_{1,\ep\p^{4t}}) \oplus 
\Sh_{r+2t}(\phi^{\alpha^{t}E},\aphi X_{\ep,\p^{4t}}) \right).
\end{equation}
On the other hand, if $T = T_{\p^{-1},\ep\p}$ and $y=1$, then a corresponding set of double coset representatives is $\Lambda = \{I,E^\eta\}$.  As in the depth-zero case, the depths of the components occuring at $y=1$ are offset by one from those occuring at $y=0$.  We have therefore that $\displaystyle \Res_K \cind_{TG_{1,s}}^G \rho$ decomposes into irreducibles as
\begin{equation} \label{E:unram2}
\bigoplus_{t\geq 0}\left( \Sh_{r+2t+1}(\phi^{\alpha^{t}},\aphi X_{1,\ep\p^{4t+2}}) \oplus \Sh_{r+2t+1}(\phi^{\alpha^{t}E^\eta},\aphi X_{\ep,\p^{4t+2}}) \right).
\end{equation}
Thus the decomposition of unramified supercuspidal representations continues the pattern established for depth-zero supercuspidal representations, with pairs of components occuring only for every other depth after $r$.

The decomposition of the ramified supercuspidal representations follows the pattern established for some reducible principal series instead.  Namely, let $T = T_{\gamma_1,\gamma_2}$ be a ramified torus as in Table~\ref{Table:tori} and set $y=\frac12$.  Then we may take $\Lambda = \{I, w\}$ where $w = \smat{0& 1\\-1&0}$.  This time $\Ind_{K \cap (TG_{y,s})^{\alpha^t}}^K \rho^{\alpha^t}$ has depth $r+2t+\frac12$ whereas $\Ind_{K \cap (TG_{y,s})^{\alpha^tw}}^G \rho^{\alpha^tw}$ has depth $r+2t-\frac12$.  We thus obtain a single component of each integral depth greater than $r$.  Following the same argument as in the unramified case, we find that these components are each irreducible and constructible from Shalika data which is a conjugate of $(\phi,\Aphi)$.  We have \cite{Nevins2}
\begin{align} \label{E:ram}
\Res_K & \cind_{TG_{\frac12,s}}^G \rho \cong  \Sh_{r+\frac12}(\phi, \aphi X_{\gamma_1,\gamma_2}) \oplus \\
\notag &\bigoplus_{t\geq 0} \left(
\Sh_{r+2t-\frac12}(\phi^{\alpha^tw}, \aphi X_{-\gamma_2\p^{-1},-\gamma_1\p^{4t-1}})
\oplus 
\Sh_{r+2t+\frac12}(\phi^{\alpha^t}, \aphi X_{\gamma_1,\gamma_2\p^{4t}})
\right).
\end{align}

Finally, we consider the principal series of positive depth.  Let $\chi$ be a character of $\ratk^\times$ of integral depth $r>0$.  Then for any $\gamma \in \R$,  $\chi$ defines a character of $T_{1,\gamma^2}$ via $\chi_\gamma(t(a,b)) = \chi(a+b\gamma)$.  In \cite[Lemma 7.1]{Nevins} we define a scalar $\lambda = \lambda_\chi \in \R^\times$, uniquely defined modulo $\PP^{\lrc{s+}}$, derived from the restriction of $\chi$ to $1 + \PP^{\lrc{s}}$.  If one sets $u_0 = \lambda\p^t$ and $u_1 =  \ep^{-1}u_0$, then we have that $\displaystyle \Res_K \Ind_P^G \chi$ decomposes into irreducibles as
\begin{equation} \label{E:principalseries}
(\Ind_{P}^G \chi)^{G_{0,r+}} \oplus \bigoplus_{t>0} \left( 
\Sh_{r+t}(\chi_{u_0},X_{1, u_0^2}) \oplus \Sh_{r+t}(\chi_{u_1},X_{\ep,\ep u_1^2}) \right).
\end{equation}

We conclude with the following observation.

\begin{observation} \label{O:posdepth}
If $\pi$ is an irreducible representation of $G$ of positive depth $r$, then the pattern of the number of irreducible components of $\Res_K(\pi)$ of depth greater than $r$ identifies the class of $\pi$ among unramified supercuspidal, ramified supercuspidal and principal series representations.  Namely, the unramified supercuspidal representations give two irreducible components at every other depth; the ramified supercuspidal representations give one irreducible component at each depth; and the principal series representations give two irreducible components at each depth.
\end{observation}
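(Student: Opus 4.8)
The plan is to read the statement off directly from the explicit branching formulas of Section~\ref{S:BR}, once the appropriate classification is in place.

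First I would recall, via Jacquet's theorem and the constructions summarized in Section~\ref{S:reps}, that every irreducible representation $\pi$ of $G = \SL_2(\ratk)$ of positive depth $r$ belongs to exactly one of three families: a principal series $\Ind_P^G\chi$ with $\chi$ a character of $\ratk^\times$ of depth $r$; an unramified supercuspidal representation $\cind_{TG_{y,s}}^G\rho$ with $T$ one of $T_{1,\ep}$ or $T_{1,\ep}^\eta$, the corresponding $y\in\{0,1\}$, and $s=r/2$; or a ramified supercuspidal representation $\cind_{TG_{\frac12,s}}^G\rho$ with $T$ one of the ramified tori of Table~\ref{Table:tori}. Here one also notes that a principal series $\Ind_P^G\chi$ can only fail to be irreducible when $\chi$ is the trivial or a sign character, all of which have depth zero; hence every positive-depth principal series is irreducible, and formula \eqref{E:principalseries} applies to all of them.

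Next, for each family I would extract from the relevant formula the multiplicity, for each $d>r$, of the irreducible components of $\Res_K(\pi)$ of depth exactly $d$. From \eqref{E:unram} (the case $y=0$) and \eqref{E:unram2} (the case $y=1$) one sees that in the unramified supercuspidal case such components occur two at a time, and only at depths of one fixed parity beyond $r$ (at $r+2,r+4,\dots$ when $y=0$, at $r+1,r+3,\dots$ when $y=1$), with no component at the intervening depths; the lone extra summand $\Ind_{TG_{0,s}}^K\rho$ in \eqref{E:unram} has depth $r$ and is not counted. From \eqref{E:ram}, where $r\in\frac12+\Z$, exactly one component of depth greater than $r$ occurs at each such depth (the remaining summand, of depth below $r$, again not being counted). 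From \eqref{E:principalseries} exactly two components occur at each depth $r+t$, $t\geq 1$, the summand $(\Ind_P^G\chi)^{G_{0,r+}}$ again having depth $r$. In each case one uses that the displayed decomposition is into pairwise inequivalent irreducibles, as established in \cite{Nevins, Nevins2}; in particular the two Shalika summands listed at a common depth in \eqref{E:unram}, \eqref{E:unram2} and \eqref{E:principalseries} are inequivalent, their parameters being non-conjugate under $K$.

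Finally, I would observe that the three resulting multiplicity sequences are pairwise distinct: ``two at every other depth, none in between'' contains the value zero, which neither of the others does; ``one at every depth'' takes an odd value, which ``two at every depth'' never does. Since the depth $r$ and the $K$-decomposition $\Res_K(\pi)$ are invariants of $\pi$, the pattern therefore pins down the family, as claimed. The main point requiring care is purely bookkeeping --- keeping track of the half-integer shift of the depths in the ramified case, of the parity offset between $y=0$ and $y=1$ in the unramified case, and of the one component of depth $\leq r$ that each formula carries and that must be discarded --- rather than any genuine obstacle: the substance of the result is already contained in the branching rules of \cite{Nevins, Nevins2}.
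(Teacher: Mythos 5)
Your proposal is correct and follows essentially the same route as the paper, where the observation is simply read off from the explicit decompositions \eqref{E:unram}, \eqref{E:unram2}, \eqref{E:ram} and \eqref{E:principalseries} together with the classification of positive-depth irreducibles into the three families. The bookkeeping you make explicit (irreducibility of positive-depth principal series, discarding the summand of depth not exceeding $r$, and the pairwise distinctness of the three multiplicity patterns) is exactly what the paper leaves implicit.
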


\section{Main Theorems and Conclusions} \label{S:conclusion}


\subsection{Tail ends} Casselman and Silberger observed that for the groups $\GL_2(\ratk)$ and $\PGL_2(\ratk)$, respectively, the restriction of any two irreducible representations to a maximal compact subgroup can differ by at most a finite-dimensional piece.  While this is not the case for $\SL_2(\ratk)$, their results do imply that all but finitely many of the irreducible $K$-representations occuring in the restriction of any irreducible representation of $\SL_2(\ratk)$ must come from a common library.  Given the above explicit branching rules, we can elaborate on this property.

\begin{theorem} \label{T:tailends}
Let $\pi$ be an irreducible representation of $G$ of depth $r$.  Then the irreducible components of $\Res_K\pi$ of depth greater than $2r$ 
are chosen from the set
$$
\{ \Sh_{d}(\ch, X_{1,0}), \Sh_d(\ch,X_{\ep,0}) \mid d>2r\}
$$
where $\ch$ denotes the central character of $\pi$.
\end{theorem}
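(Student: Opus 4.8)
The plan is to distinguish the case $r=0$ from the case $r>0$. When $r=0$, the assertion is just a restatement of Observation~\ref{O:depthzero}: every irreducible component of $\Res_K\pi$ of positive depth has depth $>0=2r$, and that observation already confines such components to $\{\Sh_d(\ch,X_{1,0}),\Sh_d(\ch,X_{\ep,0})\mid d\geq 1\}$. All the content therefore lies in the case $r>0$, which I would treat according to the three families of Observation~\ref{O:posdepth}; the reducible principal series require no separate discussion, since for $p$ odd every $\sgn_\tau$ is trivial on $1+\PP$, hence of depth zero.

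So fix $r>0$ and an irreducible component of $\Res_K\pi$ of depth $d>2r$, read off from whichever of \eqref{E:unram}, \eqref{E:unram2}, \eqref{E:ram} or \eqref{E:principalseries} applies (the leading, $t=0$, summands $\Ind_{TG_{0,s}}^K\rho$, $\Sh_{r+\frac12}(\phi,\aphi X_T)$ and $(\Ind_P^G\chi)^{G_{0,r+}}$ are discarded immediately, each having depth at most $r\leq 2r$). In every case this component is a Shalika representation $\Sh_d(\psi,X)$ whose parameter, brought to the normal form of Section~\ref{S:reps} by rescaling so that its upper-right entry is a unit, then acquires a lower-left entry whose valuation is $4t$ in the unramified cases, $4t\pm 1$ in the ramified cases, and $2t$ in the principal series case, where $t$ is the relevant summation index.

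The crux is a short numerical fact, to be verified case by case over Table~\ref{Table:tori} and organized by the parity of $r$ (and of $r-\frac12$ in the ramified case): writing $\ell=d/2$, the hypothesis $d>2r$ forces the above valuation to be at least $\lrc{\ell+\frac12}$, so that the normalized parameter lies in $X_{1,0}+\g_{[0,\frac12],\ell}$ or $X_{\ep,0}+\g_{[0,\frac12],\ell}$, according to the square class of the (now unit) upper-right entry. Since, by Shalika's classification recalled in Section~\ref{S:reps}, $\Sh_d(\psi,X)$ depends on $X$ only through its $K$-conjugacy class modulo $\g_{[0,\frac12],\ell}$, the parameter may at this point be replaced by $X_{1,0}$ or $X_{\ep,0}$.

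It remains to pin down the character. Here the key point is that $\ell=d/2>r$ throughout this range, so the generic character $\phi$ of $T$ (respectively the character $\chi$ of $\ratk^\times$), being of depth $r$, is trivial on the depth-$\lrc{\ell}$ part of $T$ (respectively of $\ratk^\times$); consequently $\psi$, restricted to the relevant quotient of $C(X)G_{[0,\frac12],\ell}$, is determined by $\Psi_X$ together with the single sign $\psi(-I)$, which equals $\ch(-I)$ because $Z$ is fixed by all the conjugations involved and the central character of a compactly induced representation is the restriction to $Z$ of the inducing character. Matching this against the data of $\Sh_d(\ch,X_{1,0})$ or $\Sh_d(\ch,X_{\ep,0})$ completes the argument. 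I expect this final matching to be the main obstacle: showing cleanly that, once the $X$-parameter has degenerated to a nilpotent $X_{u,0}$, the inducing subgroup $C(X)G_{[0,\frac12],\ell}$ and the inducing character on it genuinely collapse to the depth-zero Shalika data $(\ch,X_{u,0})$ — and not merely to a representation of the same depth and degree — is where the bookkeeping of \cite{Nevins,Nevins2} must be brought to bear.
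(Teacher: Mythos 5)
Your proposal is correct and follows essentially the same route as the paper: dispose of depth zero via Observation~\ref{O:depthzero}, then go case by case through \eqref{E:unram}, \eqref{E:unram2}, \eqref{E:ram} and \eqref{E:principalseries}, degenerating the parameter $X$ to $X_{1,0}$ or $X_{\ep,0}$ modulo $\g_{[0,\frac12],\ell}$ and using the depth-$r$ triviality of $\phi$ (or $\chi$) on the deep part of the centralizer to collapse the character to the central character $\ch$ exactly when $d>2r$. The ``final matching'' you flag as the main obstacle is precisely what the paper settles at the outset by recalling that $\Sh_{2\ell}(\varphi,X)$ depends only on $X$ modulo $\g_{[0,\frac12],\ell}$ (up to $K$-conjugacy) and on $\varphi$ restricted to $C(X)$ — here $C(X)$ is computed to be $Z$ times a torus filtration subgroup of depth exceeding $r$ — so no further bookkeeping from \cite{Nevins,Nevins2} is needed beyond what you describe.
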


\begin{proof}
This has already been established in the depth-zero case (Observation~\ref{O:depthzero}).

Recall that the representation $\Sh_{2\ell}(\varphi, X)$ depends only on the $K$-conjugacy class of $X$ modulo $\mathfrak{g}_{[0,\frac12],\ell}$, and for equal choices of $X$, two such representations are equivalent only if the characters $\varphi$ are equal on $C(X)$.  Note that if $C(X) = T_{1,\gamma}$, where $\val(\gamma)=v>0$, then the elements $t(a,b)\in T_{1,\gamma}$ satisfy $a \equiv \pm 1\mod \PP^v$. 

First let $(\pi,V)$ be a principal series representation with branching rules as in  \eqref{E:principalseries}.  In each component of depth $r+t$, $t>0$, we have $\varphi = \chi_\gamma$ for some $\gamma$ of valuation $t$.  Now $\chi_\gamma(t(a,b)) = \chi(a+b\gamma) = \chi(aI)$ for all $t(a,b)\in T_{1,\gamma^2}$ if and only if $t=\val(\gamma)>r$, in which case $\chi_\gamma$ acts by the central character $\ch$ of $\chi$.   Furthermore, when $t>r$, we see that $X_{1, (\lambda\p^{t})^2}$ and $X_{\ep,\ep (\ep^{-1}\lambda\p^{t})^2}$ are equivalent modulo $\mathfrak{g}_{[0,\frac12],s+t/2}$ to  $X_{1,0}$ and $X_{\ep,0}$, respectively.  It follows that
$$
V / V^{G_{0,(2r)+}} \cong \bigoplus_{d > 2r} \left(\Sh_{d}(\ch,X_{1,0}) \oplus \Sh_d(\ch,X_{\ep,0}) \right).
$$

Next let $(\pi,V)$ be an unramified supercuspidal representation, with branching rules as in \eqref{E:unram} or \eqref{E:unram2}.  First let $y=0$ and $T = T_{1,\ep}$; we verify directly that $C(\aphi X_{1,\ep\p^{4t}})^{\alpha^t}$ and  $C(\aphi X_{\ep,\p^{4t}})^{\alpha^tE}$ are simply equal to $ZT_{2t}$.  Thus $\phi^{\alpha^t}$ and $\phi^{\alpha^tE}$ are given by the central character $\ch$ of $\phi$ if and only if $2t > r$, or when $r+2t > 2r$. 
 Similarly, when $y=1$ and $T = T_{\p^{-1},\ep\p}$, we have $C(\aphi X_{1,\ep\p^{4t+2}})^{\alpha^t}$ and $C(\aphi X_{\ep,\p^{4t+2}})^{\alpha^tE^\eta}$ are simply equal to $ZT_{2t+1}$, so we may replace the characters $\phi^{\alpha^t}$ and $\phi^{\alpha^tE^\eta}$ by the central character $\ch$ if and only if $2t+1>r$, or $r+2t+1>2r$.  As above, we note that the elements $\aphi X_{u,v} = X_{\aphi u, \aphi v}$ appearing as parameters in \eqref{E:unram} and \eqref{E:unram2} are equivalent modulo $\mathfrak{g}_{[0,\frac12],s+t/2}$ to $X_{\aphi u, 0}$.  Thus up to $K$-conjugacy, which allows us to scale $X_{u,0}$ by a square in $\R^\times$, we may replace each $\aphi u$ by a simpler equivalent representative in $\{1,\ep\}$.  We deduce that we have
$$
V / V^{G_{0,(2r)+}} \cong \bigoplus_{t > s+y/2} \left(\Sh_{r+2t+y}(\ch,X_{1,0}) \oplus \Sh_{r+2t+y}(\ch,X_{\ep,0}) \right).
$$

Finally, we let $(\pi,V)$ be a ramified supercuspidal representation, with branching rules as in \eqref{E:ram}.  Since $C(X_{\gamma_1,\gamma_2\p^{4t}})^{\alpha^t} = Z(T_{\gamma_1,\gamma_2})_{2t+\frac12}$, $\phi^{\alpha^t}$ is given by the central character $\ch$ of $\phi$ for $2t+\frac12>r$.  Similarly, $\phi^{\alpha^tw}$ reduces to $\ch$ for $2t-\frac12 > r$.  As before, for this range of $t$, we may replace $\aphi X_{\gamma_1,\gamma_2\p^{4t}}$ by $X_{\aphi \gamma_1,0}$ and $\aphi X_{-\gamma_2\p^{-1},-\gamma_1\p^{4t-1}}$ by $X_{-\aphi \gamma_2 \p^{-1},0}$.   Considering the possibilities for $(\gamma_1,\gamma_2)$ given in Table~\ref{Table:tori}, we see that if $-1\in (\ratk^\times)^2$ and $T$ splits over $\ratk(\sqrt{\p})$, or if $-1\notin (\ratk^\times)^2$ and $T$ splits over $\ratk(\sqrt{\ep\p})$ (in brief: if $T$ splits over $\ratk(\sqrt{-\p})$)  then $\aphi \gamma_1$ and $-\aphi \gamma_2\p^{-1}$ lie in the same square class of $\R^\times$.  This yields
$$
V / V^{G_{0,(2r)+}} \cong \bigoplus_{d>2r} \Sh_{d}(\ch, X_{z,0})
$$
where $z \in \{1,\ep\}$ represents the square class of $\aphi \gamma_1$.

On the other hand, if $T$ splits over $\ratk(\sqrt{-\ep\p})$, then we instead have
\begin{equation} \label{E:z}
V / V^{G_{0,(2r)+}} \cong \bigoplus_{d>2r} \Sh_d(\ch, X_{z(d),0})
\end{equation}
where $z$ is a function of the parity of $d$ taking values in $\{1,\ep\}$, such that $z(r+\frac12)$ is in the square class of $\aphi \gamma_1$.

This completes the proof.
\end{proof}

In particular, we see that the tail ends of the branching rules for supercuspidal representations of $\SL_2(\ratk)$ contain more information about the inducing representation than do those for $\GL_2(\ratk)$; one detects not only the central character of $\pi$ but also in many cases its class.  From the proof of Theorem~\ref{T:tailends} we recover the following precise version of this statement.

\begin{scholium} \label{S:1}
If $(\pi,V)$ and $(\pi',W)$ are two irreducible representations of $G$ of positive (but not necessarily equal) depth, such that for some $t > 0$, $V/V^{G_{0,t+}} \cong W/W^{G_{0,t+}}$ as representations of $K=G_0$, then $\pi$ and $\pi'$
\begin{itemize}
\item have the same central character and 
\item are either both principal series representations, or both supercuspidal representations corresponding to tori which split over the same field.
\end{itemize}
\end{scholium}

We remark that this does not extend to the depth-zero case: the three reducible principal series, whose branching rules were given in \eqref{E:redprinseries}, have decompositions whose tail ends mimic those of supercuspidal representations corresponding to the three different splitting fields of tori.  They are distinguished as occuring as constituents of principal series only by their \emph{leading term} (that is, the component(s) of the branching rules of least depth).

\subsection{$K$-intertwining}  \label{SS:42}

Having established when the tail ends of the branching rules of two representations of $G$ are equal, one may ask more generally when two representations will intertwine as representations of $K$.   For example, we see that this can occur if they have the same central character, they arise from tori with different splitting fields and, in the case of two unramified representations, have depths of opposite parity.  

We have the following converse to Scholium~\ref{S:1}.

\begin{corollary}
Let $(\pi,V),(\pi',W)$ be two representations of positive depth associated to the same torus $T$ (split, in the case of principal series, or anisotropic in the case of supercuspidal representations), with the same central character.  In the case of supercuspidal representations, let $\aphi, \aphi' \in \R^\times$ denote the scalars arising in the restriction of $\phi$ (respectively, $\phi'$) to $T_{s+}$ (as in Section~\ref{SS:pos}).  Then $\Res_K\pi$ and $\Res_K\pi'$ intertwine if any of the following conditions hold:
\begin{enumerate}[(a)]
\item $T$ is split;
\item $T$ is unramified and the depths of $\pi$ and $\pi'$ are of equal parity;
\item $T$ is ramified, splitting over $\ratk(\sqrt{-\p})$, and $\aphi \equiv \aphi' \mod (\R^\times)^2$;
\item $T$ is ramified, splitting over $\ratk(\sqrt{-\ep\p})$, and the associated functions $z$ defined in \eqref{E:z} are equal.
\end{enumerate}
If in addition we have $\aphi=\aphi'$ then we have $V/V^{G_{0,(r+d)+}} \cong W/W^{G_{0,(r+d)+}}$, where  $d$ the depth of $\phi^{-1}\phi'$ (or of $\chi^{-1}\chi'$ in the split case).
\end{corollary}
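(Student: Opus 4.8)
The plan is to reduce the asserted isomorphism to a constituent-by-constituent comparison of the branching rules of Section~\ref{SS:pos}, invoking the centralizer identities from the proof of Theorem~\ref{T:tailends} to see which restriction of $\phi$ (respectively $\chi$) each constituent detects. The first step is an observation on the filtration by $V^{G_{0,m+}}$: each irreducible constituent of $\Res_K\pi$ listed in Section~\ref{SS:pos} is either a Shalika representation $\Sh_\delta(\varphi,X)$, which is trivial on $G_{0,\delta+}$ because its defining character $\Psi_X$ is and $G_{0,\delta+}$ is normal in $K$ and contained in $C(X)G_{[0,\frac12],\delta/2}$, or one of the leading terms $(\Ind_P^G\chi)^{G_{0,r+}}$ or $\Ind_{TG_{0,s}}^K\rho$, which have depth at most $r$ and are trivial on $G_{0,r+}$. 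As $r\le r+d$, every constituent of depth $\le r+d$ therefore lies in $V^{G_{0,(r+d)+}}$, while by the definition of depth a constituent of depth $>r+d$ has no nonzero $G_{0,(r+d)+}$-fixed vector; hence $V/V^{G_{0,(r+d)+}}$ is exactly the sum of the constituents of $\Res_K\pi$ of depth $>r+d$, and similarly for $W$. So it suffices to prove that $\Res_K\pi$ and $\Res_K\pi'$ have the same constituents in every depth $>r+d$. I would also note here that $\aphi=\aphi'$ forces $\pi$ and $\pi'$ to have a common depth $r$ — in the supercuspidal cases by the genericity normalization $\val(\aphi\gamma_1)=-(r+y)$, and in the split case we read the hypothesis as including the assertions that $\chi$ and $\chi'$ have depth $r$ and $\lambda_\chi=\lambda_{\chi'}$ — and that, being attached to the same torus, $\pi$ and $\pi'$ belong to the same class and list constituents at the same depths.

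Next I would treat the unramified supercuspidal case. Take $y=0$ and $T=T_{1,\ep}$ (the case $y=1$ differs only in replacing $T_{2t}$ by $T_{2t+1}$). By \eqref{E:unram} the pair of depth-$(r+2t)$ constituents ($t>0$) is $\Sh_{r+2t}(\phi^{\alpha^t},\aphi X_{1,\ep\p^{4t}})$ and $\Sh_{r+2t}(\phi^{\alpha^tE},\aphi X_{\ep,\p^{4t}})$. Since $\Sh_{r+2t}(\varphi,X)$ depends only on $X$ modulo $\mathfrak{g}_{[0,\frac12],(r+2t)/2}$ and on $\varphi|_{C(X)}$, and since $\aphi=\aphi'$ makes the $X$-parameters of $\pi$ and $\pi'$ coincide, the identities $C(\aphi X_{1,\ep\p^{4t}})^{\alpha^t}=C(\aphi X_{\ep,\p^{4t}})^{\alpha^tE}=ZT_{2t}$ recorded in the proof of Theorem~\ref{T:tailends} show that this pair is determined by $\aphi$ together with the restriction $\phi|_{ZT_{2t}}$. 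Now $\phi^{-1}\phi'$ has depth $d$, hence $\phi$ and $\phi'$ agree on $T_{2t}$ whenever $2t>d$; they also agree on $Z$, having a common central character. Therefore the depth-$(r+2t)$ constituents of $\pi$ and $\pi'$ agree for every $t$ with $r+2t>r+d$. The ramified case \eqref{E:ram} is handled identically, using the identification $C(X_{\gamma_1,\gamma_2\p^{4t}})^{\alpha^t}=Z(T_{\gamma_1,\gamma_2})_{2t+\frac12}$ and its $\alpha^tw$-counterpart $Z(T_{\gamma_1,\gamma_2})_{2t-\frac12}$ from that proof; the term $\Sh_{r+\frac12}(\phi,\aphi X_{\gamma_1,\gamma_2})$ has depth $r+\frac12\le r+d$ unless $\phi=\phi'$ (in which case $\pi=\pi'$ and there is nothing to prove), so it lies in $V^{G_{0,(r+d)+}}$ and is irrelevant.

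The split case runs in parallel. By \eqref{E:principalseries} the depth-$(r+t)$ constituents are $\Sh_{r+t}(\chi_{u_0},X_{1,u_0^2})$ and $\Sh_{r+t}(\chi_{u_1},X_{\ep,\ep u_1^2})$ with $u_0=\lambda_\chi\p^t$ and $u_1=\ep^{-1}u_0$, so $\lambda_\chi=\lambda_{\chi'}$ makes the $X$-parameters agree, while the character-parameter is $t(a,b)\mapsto\chi(a+bu_0)$ on $C(X_{1,u_0^2})$, a subgroup that maps isomorphically onto $\{\pm1\}(1+\PP^t)\subseteq\R^\times$. As $\chi^{-1}\chi'$ has depth $d$ and $\chi(-1)=\chi'(-1)$, the characters $\chi$ and $\chi'$ agree on $\{\pm1\}(1+\PP^t)$ whenever $t>d$, that is, for every constituent of depth $>r+d$. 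Combining the three cases yields $V/V^{G_{0,(r+d)+}}\cong W/W^{G_{0,(r+d)+}}$.

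The one point I expect to require care is the bookkeeping in the second and third steps: in each class one must match the index of the relevant Moy--Prasad filtration subgroup of $T$ (or congruence subgroup of $\R^\times$) to the depth of the constituent it parametrizes, so that the condition ``$\phi$ and $\phi'$ agree on that subgroup'' is equivalent to ``the constituent has depth $>r+d$'', i.e.\ so that the cut-off falls precisely at $(r+d)+$. All of this matching is supplied by the centralizer identities already established in the proof of Theorem~\ref{T:tailends} (and in \cite{Nevins,Nevins2}), so no genuinely new computation is needed beyond checking that, given $\aphi=\aphi'$, the two representations produce the same $X$-parameters at each depth.
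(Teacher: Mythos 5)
Your proposal proves only the \emph{final} assertion of the corollary and never addresses its main claim, namely that $\Res_K\pi$ and $\Res_K\pi'$ intertwine under any one of the conditions (a)--(d). Those conditions are strictly weaker than the hypotheses your argument uses: in (b) the depths of $\pi$ and $\pi'$ need only have equal parity (they may be unequal, so there is no common $r$ and no meaningful $d$), and in (c) one only has $\aphi\equiv\aphi'\bmod(\R^\times)^2$, so the $X$-parameters of corresponding components need not literally coincide --- yet your constituent-by-constituent matching at depths $>r+d$ rests precisely on a common depth and on $X=X'$. The paper disposes of (a)--(d) differently: by Theorem~\ref{T:tailends}, beyond depth $2\max(r,r')$ each restriction is a sum of representations in normal form, $\Sh_d(\ch,X_{1,0})\oplus\Sh_d(\ch,X_{\ep,0})$ at each depth (split) or at every other depth of fixed parity (unramified), and $\Sh_d(\ch,X_{z,0})$ or $\Sh_d(\ch,X_{z(d),0})$ in the two ramified cases, where $z$ depends only on the square class of $\aphi\gamma_1$; each of (a)--(d) is exactly the condition ensuring these tail ends coincide, whence infinitely many common components and nonzero intertwining. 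Some form of this tail-end comparison (it requires the $K$-conjugacy reduction of $\aphi X_{u,v}$ to $X_{1,0}$ or $X_{\ep,0}$, not equality of the raw parameters) must be supplied; without it the enumerated cases are unproved.

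The part you do treat is correct and is essentially the paper's own argument for the last sentence: $\aphi=\aphi'$ gives $X=X'$, equivalence of Shalika representations with equal $X$ forces equality of the characters on $C(X)$, and the centralizer identities from the proof of Theorem~\ref{T:tailends} translate agreement of those characters into agreement of $\phi$ and $\phi'$ on $ZT_{2t}$ (resp.\ $Z(T)_{2t\pm\frac12}$, resp.\ of $\chi,\chi'$ on $\{\pm1\}(1+\PP^t)$), which holds exactly at depths greater than $r+d$. Your preliminary filtration observation (that $V/V^{G_{0,(r+d)+}}$ is the sum of the constituents of depth $>r+d$), your remark that $\aphi=\aphi'$ forces a common depth, and your handling of the ramified leading term are all fine, with only small points left implicit (e.g.\ that $\phi\neq\phi'$ with equal central character forces $d\geq\frac12$, which follows from $T=ZT_{0+}$ for a ramified anisotropic torus, and the reading of the hypothesis in the split case as $\lambda_\chi=\lambda_{\chi'}$). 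So the proposal, as written, establishes the concluding isomorphism but leaves the corollary's principal statement, cases (a)--(d), without proof.
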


\begin{proof}
In each of the cases enumerated, one sees that the tail ends of the branching rules will coincide.  For the final assertion, note that the additional hypothesis implies that $X=X'$.  Given that $\Sh_d(\varphi,X) \cong \Sh_d(\varphi',X)$ only if $\varphi = \varphi'$, and these characters are given here by $\phi^{\lambda}$ and ${\phi'}^{\lambda}$ respectively, for $\lambda$ running over the Mackey coset representatives, we may repeat the argument used in the proof of Theorem~\ref{T:tailends} to deduce the equivalence of all corresponding components of depth greater than $r+d$.
\end{proof}


We note from this discussion that the degree of intertwining between the restrictions of two representations is another tool for identifying the class of one representation relative to another.

\subsection{Distribution of $K$-representations}
The leading terms of principal series representations were shown in \cite{Nevins} to correspond to split elements $X$  via the Shalika classification.  The construction of $\rho$ in the case of unramified supercuspidal representations mimics that used by Shalika to construct the representations corresponding to unramified elements $X$ in \cite{Shalika}, so the leading terms in the unramified case correspond to so-called unramified representations in the Shalika classification.  We've shown here that the leading terms of ramified supercuspidal representations are ramified Shalika representations.  

Unsurprisingly, all representations of $K$ appear in the restriction of some representation of $G$, although their distribution is not uniform.  Except for the leading terms, every $K$-irreducible occuring in the branching rules for any irreducible representation $\pi$ of $G$ is a so-called ramified representation of $K$.

That said, note that the centralizer $C(X_{\gamma_1,\gamma_2})$ appearing in the construction of Shalika's ramified representations can be, when $\gamma_1\gamma_2\neq 0$, any type of split, ramified or unramified torus, depending on the square class of $\gamma_1\gamma_2$.  In fact, our derivation of the Shalika data from the supercuspidal $G$-data implies that $C(X) = C(\Aphi^\gamma) = K \cap C(\Aphi)^\gamma = K\cap T^\gamma$ for some $\gamma \in \{I, \alpha^t, \alpha^t\lambda \mid \lambda \in \Lambda, t>0\}$, which will have the same splitting field as $T$.  Alternately, one can observe this directly from the branching rules.

\subsection{Leading terms}
We note that the restriction to $K$ of a depth-zero supercuspidal representation $\pi$ induced from $K$ yields as its leading term  the inducing representation.  Consequently, $\pi$ may be entirely recovered from the leading term of its branching rules.  Similarly, transitivity of induction implies that the leading terms of positive depth supercuspidal representations constructed from tori contained in $K$ induce to give the full representation.

The situation for principal series is quite different; one can hope at best to recover $\Res_K \chi$, rather than all of $\chi$.  This is in the spirit of the theory of types, however.  It was observed in \cite{Nevins} that the inducing datum for the leading term of a principal series representation of $G$ is a Bushnell-Kutzko type for the representation; this was also observed for principal series of $\GL_3(\ratk)$ in \cite{NC1}.  

For those supercuspidal representations induced from tori contained in $K^\eta$ but not $K$, however, there is a finite amount of information lost in restricting to $K$.  Namely, in the depth-zero case, one recovers only the central character of the inducing cuspidal representation; and in positive depth cases, there are no components in which the character $\phi$ appears in its entirety as it only appears as its restriction to $K$.   It follows that to mitigate such information loss, one should consider the restriction to $K^\eta$ as well.

\subsection{Restriction to $K^\eta$}
In \cite{Nevins}, the representation theory of $K^\eta$ was defined relative to that of $K$ so as to permit an equally explicit formulation of the branching rules for $\Res_{K^\eta}\pi$, for $\pi$ a principal series representation.   One discovered that the restriction of principal series to $K^\eta$ gave an entirely analogous decomposition into irreducibles, one which reproduced in all but the cases of reducible principal series the information gleaned from the restriction to $K$.

In the case of supercuspidal representations, on the other hand, the two restrictions will not be entirely redundant.   It is easy to see, using the Mackey decomposition as above, that the restriction to $K^\eta$ of a depth-zero supercuspidal representation induced from $K^\eta$ will yield as its leading term the inducing representation.  

For a more general group $G$, one anticipates a similar rationing of data about supercuspidal representations among the various conjugacy classes of maximal compact subgroups.  Furthermore, the use of all conjugacy classes of maximal compact subgroups will ensure that one always has some leading component containing a type for the representation.

\subsection{$L$-packets} \label{SS:46}
Let $\Pi$ be an irreducible representation of $\GL_2(\ratk)$.  The restriction of $\Pi$ to $\GL_2(\R)$ is known (by Hansen \cite{Hansen} for supercuspidal representations, and by Silberger \cite{Silberger} for principal series representations (of $\PGL_2(\ratk)$)) to admit a single irreducible representation at each depth greater than or equal to the depth of $\Pi$.   

Given that the $L$-packet corresponding to $\Pi$ consists of the irreducible summands of $\Res_{\SL_2(\ratk)}\Pi$, it follows that the sum of the branching rules of these irreducible summands should give the restriction to $\SL_2(\R)$ of $\Pi$.  Thus we observe (a fact which can also be shown directly) that each such $\GL_2(\R)$-representation decomposes as exactly two irreducible representations of $\SL_2(\R)$ upon restriction.  We deduce that the patterns in the branching rules are in part a consequence of the structure of the $L$-packets.

\subsection{Branching rules of supercuspidal representations and the representation theory of $K$}  \label{SS:last}

The construction of supercuspidal representations via induction from compact open subgroups necessarily provides a construction of certain irreducible representations of maximal compact subgroups, via transitivity.  That is, when $TG_{y,s} \subseteq K$, $\Ind_{TG_{y,s}}^K\rho$ is irreducible.

In the case of $\SL_2(\ratk)$, we see that these form only a small portion of the representations of $K$, namely those occuring as a leading term of the branching rules of a supercuspidal representation.  Another set --- the leading terms of principal series representations --- are obtained via parabolic induction in the groups $\SL_2(\R/\PP^n)$, which gives a simple realization of the $K_n$-invariant vectors of the principal series representation of depth $n-1$.

The remaining representations of $K$ are obtained in some sense by conjugation of the inducing datum of the leading terms, an effect which stresses the key nature of these leading terms not only in the representation theory of $G$, as in the discussion above, but also in the representation theory of $K$.

\end{document}